\newcommand{\mo}{\mathrm{mod}\,}
\newcommand{\ind}{\mathrm{ind}\,}
\newcommand{\Hom}{\mathrm{Hom}}
\newcommand{\Tr}{\mathrm{Tr}}
\newcommand{\op}{\mathrm{op}}
\newcommand{\soc}{\mathrm{soc}}
\newcommand{\End}{\mathrm{End}}
\newcommand{\Ext}{\mathrm{Ext}}
\newcommand{\rad}{\mathrm{rad}}
\newcommand{\add}{\mathrm{add}}
\newcommand{\Tor}{\mathrm{Tor}}
\newcommand{\ann}{\mathrm{ann}}
\newcommand{\pd}{\mathrm{pd}}
\newcommand{\id}{\mathrm{id}}
 \newtheorem{thm}{Theorem}[section]
 \newtheorem{prop}[thm]{Proposition}
 \theoremstyle{definition}
 \theoremstyle{remark}
 \numberwithin{equation}{section}
\begin{document}

\title[The number of terms in the middle of almost split sequences]
 {On the number of terms in the middle \\[2mm] of almost split sequences over \\[2mm] cycle-finite artin algebras}

\author[Malicki]{Piotr Malicki}
\address{Faculty of Mathematics and Computer Science, Nicolaus Copernicus University, Chopina 12/18, 87-100 Toru\'n, Poland}
\email{pmalicki@mat.uni.torun.pl}

\thanks{This work was completed with the support of the research grant No. 2011/02/A/ST1/00216 of the Polish National Science Center
and the CIMAT Guanajuato, M\'exico.}
\author[de la Pe\~na]{Jos\'e Antonio de la Pe\~na}
\address{Centro de Investigaci\'on en Mathem\'aticas (CIMAT), Guanajuato, M\'exico}
\email{jap@cimat.mx}
%
\author[Skowro\'nski]{Andrzej Skowro\'nski}
\address{Faculty of Mathematics and Computer Science, Nicolaus Copernicus University, Chopina 12/18, 87-100 Toru\'n, Poland}
\email{skowron@mat.uni.torun.pl}
\subjclass{Primary 16G10, 16G70; Secondary 16G60}

\keywords{Auslander-Reiten quiver, almost split sequence, cycle-finite algebra}


\begin{abstract}
We prove that the number of terms in the middle of an almost split sequence in the module category of a cycle-finite
artin algebra is bounded by $5$.
\end{abstract}

\maketitle
\section{Introduction and the main result}\label{intro}
Throughout this paper, by an algebra is meant an artin algebra over a fixed commutative artin ring $K$, which we moreover assume (without loss of generality)
to be basic and indecomposable. For an algebra $A$, we denote by $\mo A$ the category of finitely generated right $A$-modules, by $\ind A$ the full
subcategory of $\mo A$ formed by the indecomposable modules, by $\Gamma_A$ the Auslander-Reiten quiver of $A$, and by $\tau_A$ and $\tau^{-1}_A$
the Auslander-Reiten translations $D\Tr$ and $\Tr D$, respectively. We do not distinguish between a module in $\ind A$ and the vertex of $\Gamma_A$
corresponding to it. The Jacobson radical $\rad_A$ of $\mo A$ is the ideal generated by all nonisomorphisms between modules in $\ind A$, and the
infinite radical $\rad^{\infty}_A$ of $\mo A$ is the intersection of all powers $\rad^i_A$, $i\geq 1$, of $\rad_A$.
By a theorem of M. Auslander \cite{Au}, $\rad_A^{\infty}=0$ if and only if $A$ is of finite representation type, that is, $\ind A$ admits only a finite
number of pairwise nonisomorphic modules. On the other hand, if $A$ is of infinite representation type then $(\rad_A^{\infty})^2\neq 0$, by a theorem
proved in \cite{CMMS}.

A prominent role in the representation theory of algebras is played by almost split sequences introduced by M. Auslander and I. Reiten in \cite{AR1}
(see \cite{ARS} for general theory and applications). For an algebra $A$ and a nonprojective module $X$ in $\ind A$, there is an almost split sequence
\[ 0\to \tau_AX\to Y\to X\to 0,\]
with $\tau_AX$ a noninjective module in $\ind A$ called the Auslander-Reiten translation of $X$. Then we may associate to $X$ the numerical invariant
$\alpha(X)$ being the number of summands in a decomposition $Y=Y_1\oplus\ldots\oplus Y_r$ of $Y$ into a direct sum of modules in $\ind A$. Then
$\alpha(X)$ measures the complication of homomorphisms in $\mo A$ with domain $\tau_AX$ and codomain $X$. Therefore, it is interesting to study the
relation between an algebra $A$ and the values $\alpha(X)$ for all modules $X$ in $\ind A$ (we refer to \cite{AR2}, \cite{BaBr}, \cite{BR},
\cite{Kr}, \cite{Li3}, \cite{PeSk}, \cite{PTa}, \cite{PoSk}, \cite{SW}, \cite{WW} for some results in this direction).
In particular, it has been proved by R. Bautista and S. Brenner in \cite{BaBr} that, if $A$ is of finite representation type and $X$ a nonprojective
module in $\ind A$, then $\alpha(X)\leq 4$, and if $\alpha(X)=4$ then the middle term $Y$ of an almost split sequence in $\mo A$ with the right term $X$
admits an indecomposable projective-injective direct summand $P$, and hence $X=P/\soc(P)$. In \cite{Li3} S. Liu generalized this result by showing that
the same holds for any nonprojective module $X$ in $\ind A$ over an algebra $A$ provided $\tau_AX$ has a projective predecessor and $X$ has an injective
successor in $\Gamma_A$, as well as for $X$ lying on an oriented cycle in $\Gamma_A$ (see also \cite{Kr}).
It has been conjectured by S. Brenner that $\alpha(X)\leq 5$ for any nonprojective module $X$ in $\ind A$ for an arbitrary tame finite dimensional
algebra $A$ over an algebraically closed field $K$. In fact, it is expected that this also holds for nonprojective indecomposable modules over
arbitrary generically tame (in the sense of \cite{CB1}, \cite{CB2}) artin algebras.

The main aim of this paper is to prove the following theorem which gives the affirmative answer for the above conjecture in the case of cycle-finite
artin algebras. \\

\noindent \textbf{Theorem.} \textit{Let $A$ be a cycle-finite algebra and $X$ be a nonprojective module in $\ind A$, and
\[ 0\to \tau_AX\to Y\to X\to 0\]
be the associated almost split sequence in $\mo A$. The following statements hold.
\begin{enumerate}
\renewcommand{\labelenumi}{\rm(\roman{enumi})}
\item $\alpha(X)\leq 5$.
\item If $\alpha(X)=5$ then $Y$ admits an indecomposable projective-injective direct summand $P$, and hence $X=P/\soc(P)$.
\end{enumerate}}
\smallskip

We would like to mention that, for finite dimensional cycle-finite algebras $A$ over an algebraically closed field $K$, the theorem was proved by
J. A. de la Pe\~na and M. Takane \cite[Theorem 3]{PTa}, by application of spectral properties of Coxeter transformations of algebras and results
established in \cite{Li3}.

Let $A$ be an algebra. Recall that a cycle in $\ind A$ is a sequence
\[ X_0 \buildrel {f_1}\over {\hbox to 6mm{\rightarrowfill}} X_1 \to \cdots \to X_{r-1} \buildrel {f_r}\over {\hbox to 6mm{\rightarrowfill}} X_r=X_0 \]
of nonzero nonisomorphisms in $\ind A$ \cite{Ri1}, and such a cycle is said to be finite if the homomorphisms $f_1,\ldots, f_r$ do not belong to $\rad_A^{\infty}$.
Then, following \cite{AS2}, \cite{Sk5}, an algebra $A$ is said to be cycle-finite if all cycles in $\ind A$ are finite. The class of cycle-finite algebras contains
the following distinguished classes of algebras: the algebras of finite representation type, the hereditary algebras of Euclidean type \cite{DR1}, \cite{DR2},
the tame tilted algebras \cite{HRi1}, \cite{Ke1}, \cite{Ri1}, the tame double tilted algebras \cite{RS1}, the tame generalized double tilted algebras \cite{RS2},
the tubular algebras \cite{Ri1}, the iterated tubular algebras \cite{PTo}, the tame quasi-tilted algebras \cite{LS}, \cite{Sk8},
the tame generalized multicoil algebras \cite{MS}, the algebras with cycle-finite derived categories \cite{AS1}, and the strongly simply connected
algebras of polynomial growth \cite{Sk6}.
On the other hand, frequently an algebra $A$ admits a Galois covering $R\to R/G = A$, where $R$ is a cycle-finite locally bounded category and $G$
is an admissible group of automorphisms of $R$, which allows to reduce the representation theory of $A$ to the representation theory of cycle-finite
algebras being finite convex subcategories of $R$ (see \cite{DS}, \cite{PeSk}, \cite{Sk7} for some general results).
For example, every finite dimensional selfinjective algebra of polynomial growth over an algebraically closed field admits a canonical standard form
$\overline A$ (geometric socle deformation of $A$) such that $\overline A$ has a Galois covering $R\to R/G = \overline A$, where $R$ is a cycle-finite
selfinjective locally bounded category and $G$ is an admissible infinite cyclic group of automorphisms of $R$, the Auslander-Reiten quiver
$\Gamma_{\overline A}$ of $\overline A$ is the orbit quiver $\Gamma_R/G$ of $\Gamma_R$, and the stable Auslander-Reiten quivers of $A$ and $\overline A$
are isomorphic (see \cite{Sk1} and \cite{Sk9}). Recall also that, a module $X$ in $\ind A$ which does not lie on a cycle in $\ind A$ is called directing,
and its support algebra is a tilted algebra, by a result of C. M. Ringel \cite{Ri1}. Moreover, it has been proved independently by L. G. Peng - J. Xiao
\cite{PX} and A. Skowro\'nski \cite{Sk3} that the Auslander-Reiten quiver $\Gamma_A$ of an algebra $A$ admits at most finitely many $\tau_A$-orbits
containing directing modules.

\section{Preliminary results}
Let $H$ be an indecomposable hereditary algebra and $Q_H$ the valued quiver of $H$. Recall that the vertices of $Q_H$ are the
numbers $1, 2, \ldots, n$ corresponding to a complete set $S_1, S_2, \ldots, S_n$ of pairwise nonisomorphic simple modules in
$\mo H$ and there is an arrow from $i$ to $j$ in $Q_H$ if $\Ext^1_H(S_i,S_j)\neq 0$, and then to this arrow is assigned the
valuation $(\dim_{\End_H(S_j)}\Ext^1_H(S_i,S_j),$ $\dim_{\End_H(S_i)}\Ext^1_H(S_i,S_j))$.
Recall also that the Auslander-Reiten quiver $\Gamma_H$ of $H$ has a disjoint union decomposition of the form
\[\Gamma_H = \mathcal{P}(H) \vee \mathcal{R}(H) \vee \mathcal{Q}(H),\]
where $\mathcal{P}(H)$ is the preprojective component containing all indecomposable projective $H$-modules, $\mathcal{Q}(H)$ is the
preinjective component containing all indecomposable injective $H$-modules, and $\mathcal{R}(H)$ is the family of all regular
components of $\Gamma_H$. More precisely, we have:
\begin{itemize}
\item[$\bullet$] if $Q_H$ is a Dynkin quiver, then $\mathcal{R}(H)$ is empty and
$\mathcal{P}(H)=\mathcal{Q}(H)$;
\item[$\bullet$] if $Q_H$ is a Euclidean quiver, then $\mathcal{P}(H)\cong (-\mathbb{N})Q^{\op}_H$, $\mathcal{Q}(H)\cong \mathbb{N}Q^{\op}_H$ and
$\mathcal{R}(H)$ is a strongly separating infinite family of
stable tubes;
\item[$\bullet$] if $Q_H$ is a wild quiver, then $\mathcal{P}(H) \cong (-\mathbb{N})Q^{\op}_H$, $\mathcal{Q}(H)\cong \mathbb{N}Q^{\op}_H$ and
$\mathcal{R}(H)$ is an infinite family of components of type
$\mathbb{ZA}_{\infty}$.
\end{itemize}
Let $T$ be a tilting module in $\mo H$ and $B=\End_H(T)$ the associated tilted algebra. Then the tilting $H$-module $T$
determines the torsion pair $(\mathcal{F}(T), \mathcal{T}(T))$ in $\mo H$, with the torsion-free part $\mathcal{F}(T)=\{X \in \mo H |$ $\Hom_H(T,X)=0\}$
and the torsion part $\mathcal{T}(T)=\{X \in \mo H | \Ext^1_H(T,X)=0\}$, and the splitting torsion pair $(\mathcal{Y}(T), \mathcal{X}(T))$ in $\mo B$,
with the torsion-free part $\mathcal{Y}(T)=\{Y \in\mo B|\Tor^B_1(Y,T)=0\}$ and the torsion part $\mathcal{X}(T)=\{Y \in \mo B| Y \otimes_B T=0\}$.
Then, by the Brenner-Butler theorem, the functor $\Hom_H(T,-): \mo H \to \mo B$ induces an equivalence of $\mathcal{T}(T)$ with $\mathcal{Y}(T)$, and the
functor $\Ext^1_H(T,-): \mo H \to \mo B$ induces an equivalence of $\mathcal{F}(T)$ with $\mathcal{X}(T)$ (see \cite{BrBu}, \cite{HRi1}). Further, the images
$\Hom_H(T,I)$ of the indecomposable injective modules $I$ in $\mo H$ via the functor $\Hom_H(T,-)$ belong to one component $\mathcal{C}_T$ of $\Gamma_B$,
called the connecting component of $\Gamma_B$ determined by $T$, and form a faithful section $\Delta_T$ of $\mathcal{C}_T$, with $\Delta_T$ the opposite valued
quiver $Q^{\op}_H$ of $Q_H$. Recall that a full connected valued subquiver $\Sigma$ of a component $\mathcal{C}$ of $\Gamma_B$ is called a section
(see \cite[(VIII.1)]{ASS}) if $\Sigma$ has no oriented cycles, is convex in $\mathcal{C}$, and intersects each $\tau_B$-orbit of $\mathcal{C}$ exactly once.
Moreover, the section $\Sigma$ is faithful provided the direct sum of all modules lying on $\Sigma$ is a faithful $B$-module. The section $\Delta_T$ of the
connecting component $\mathcal{C}_T$ of $\Gamma_B$ has the distinguished property: it connects the torsion-free part $\mathcal{Y}(T)$ with the torsion part
$\mathcal{X}(T)$, because every predecessor in $\ind B$ of a module $\Hom_H(T,I)$ from $\Delta_T$ lies in $\mathcal{Y}(T)$ and every successor of
$\tau^-_B \Hom_H(T,I)$ in $\ind B$ lies in $\mathcal{X}(T)$. We note that, by a result proved in \cite{Li2} and \cite{Sk2}, an algebra
$A$ is a tilted algebra if and only if $\Gamma_A$ admits a component $\mathcal C$ with a faithful section $\Delta$ such that
$\Hom_A(X,\tau_AY)=0$ for all modules $X$ and $Y$ from $\Delta$. We refer also to \cite{JMS} for another characterization of tilted
algebras involving short chains of modules.

The following proposition is a well-known fact.
\begin{prop}\label{prop21}
Let $H$ be a hereditary algebra of Euclidean type. Then, for any nonprojective indecomposable module $X$ in $\mo H$, we have $\alpha(X)\leq 4$.
\end{prop}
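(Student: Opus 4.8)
The plan is to read everything off the explicit shape of $\Gamma_H$ recalled above, treating the regular components and the two acyclic components separately. Since $H$ is of Euclidean type, $\mathcal{R}(H)$ is a strongly separating family of stable tubes. Every module lying in a stable tube $\mathbb{Z}\mathbb{A}_{\infty}/(\tau^{r})$ is regular, hence nonprojective, and the mesh structure of such a tube is standard: a module at the mouth receives a single irreducible map, while every module of larger quasi-length receives exactly two, and all the arrows involved have trivial valuation. Thus the middle term of the almost split sequence has at most two indecomposable summands, so $\alpha(X)\leq 2$ for every $X$ in $\mathcal{R}(H)$. It therefore remains to bound $\alpha(X)$ for $X$ in the preprojective component $\mathcal{P}(H)\cong (-\mathbb{N})Q^{\op}_H$ or the preinjective component $\mathcal{Q}(H)\cong \mathbb{N}Q^{\op}_H$.

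For such an $X$ I would compute $\alpha(X)$ directly inside the valued translation quiver $(-\mathbb{N})Q^{\op}_H$, respectively $\mathbb{N}Q^{\op}_H$. Writing $X$ as a vertex $(n,i)$, the indecomposable summands of the middle term $Y$ of its almost split sequence are precisely the sources of the arrows of $\Gamma_H$ ending at $X$, each occurring with multiplicity equal to the corresponding arrow valuation; hence $\alpha(X)$ equals the sum of these valuations taken over the neighbours $j$ of $i$. If $X$ lies on the border of $\mathcal{P}(H)$ (so that some of its immediate predecessors are projective and the corresponding part of the mesh is missing), this sum can only decrease. Consequently, in every case $\alpha(X)$ is bounded above by the full vertex sum $\sum_{j} d_{ij}$, where $j$ runs over the neighbours of $i$ in the underlying valued graph of $Q_H$ and $d_{ij}$ denotes the relevant valuation entry.

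The remaining point is purely combinatorial: for every Euclidean valued quiver $Q_H$ and every vertex $i$ one has $\sum_{j} d_{ij}\leq 4$, which I would verify against the Dlab--Ringel list of Euclidean diagrams. The value $4$ is attained only in three extremal configurations — the tetravalent central vertex of $\widetilde{\mathbb{D}}_4$ (four edges of valuation $(1,1)$), the vertex facing the valuation $4$ in $\widetilde{\mathbb{A}}_{12}$/$\widetilde{\mathbb{A}}_{21}$, and the central vertex of $\widetilde{\mathbb{G}}_{21}$/$\widetilde{\mathbb{G}}_{22}$ (one edge of valuation $3$ and one of valuation $1$) — and in all other cases the sum is at most $3$. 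Combining this estimate with the tube bound gives $\alpha(X)\leq 4$ for all nonprojective $X$, as required. I expect the only delicate step to be the second one: identifying exactly which valuation entry governs the multiplicity of each middle summand (the left versus the right valuation of an arrow in $\Gamma_H$) and checking that, under the isomorphisms $\mathcal{P}(H)\cong (-\mathbb{N})Q^{\op}_H$ and $\mathcal{Q}(H)\cong \mathbb{N}Q^{\op}_H$, these valuations agree with those of $Q_H$. Once this bookkeeping is settled the conclusion is insensitive to orientation, since the Euclidean list is closed under reversal of valuations and both vertex sums $\sum_{j} d_{ij}$ and $\sum_{j} d_{ji}$ are then bounded by $4$.
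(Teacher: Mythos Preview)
The paper does not prove this proposition at all: it is simply labelled ``a well-known fact'' and stated without argument. Your outline is correct and is exactly the standard verification one would supply --- splitting into the tube case (where the $\mathbb{Z}\mathbb{A}_\infty/(\tau^r)$ mesh gives $\alpha\leq 2$) and the preprojective/preinjective case (where $\alpha$ is bounded by a vertex-valuation sum in the underlying Euclidean diagram, and the Dlab--Ringel list shows this sum never exceeds $4$). The only point you flag as delicate, namely matching the arrow valuations in $\Gamma_H$ with the multiplicities of summands in the middle term, is routine Auslander--Reiten bookkeeping, and your closing remark that the Euclidean list is stable under swapping the two valuation entries handles it cleanly.
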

An essential role in the proof of the main theorem will be played by the following theorem.
\begin{thm}\label{th22}
Let $A$ be a cycle-finite algebra, $\mathcal C$ a component of $\Gamma_A$, and $\mathcal D$ be an acyclic left stable full translation subquiver
of $\mathcal C$ which is closed under predecessors. Then there exists a hereditary algebra $H$ of Euclidean type and a tilting module $T$ in $\mo H$
without nonzero preinjective direct summands such that for the associated tilted algebra $B=\End_H(T)$ the following statements hold.
\begin{enumerate}
\renewcommand{\labelenumi}{\rm(\roman{enumi})}
\item $B$ is a quotient algebra of $A$.
\item The torsion-free part $\mathcal{Y}(T)\cap\mathcal{C}_T$ of the connecting component $\mathcal{C}_T$ of $\Gamma_B$ determined by $T$ is a full
translation subquiver of $\mathcal D$ which is closed under predecessors in $\mathcal C$.
\item For any indecomposable module $N$ in $\mathcal D$, we have $\alpha(N)\leq 4$.
\end{enumerate}
\end{thm}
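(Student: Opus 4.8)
The strategy is to realize the given subquiver $\mathcal D$ as the torsion-free part of a connecting component of a suitable Euclidean tilted algebra $B$ occurring as a quotient of $A$, and then to transport the bound of Proposition~\ref{prop21} through the tilting equivalence. First I would record the structural consequences of the hypotheses: since $\mathcal D$ is left stable it contains no projective modules, and since it is acyclic and closed under predecessors, each $N\in\mathcal D$ has all its $\tau_A$-translates $\tau_A^n N$ ($n\geq 0$) and all its predecessors again in $\mathcal D$; in particular $\mathcal D$ is infinite. Using the general theory of acyclic translation quivers I would extract from $\mathcal D$ a convex full subquiver $\Delta$ meeting each $\tau_A$-orbit of $\mathcal C$ exactly once, so that $\Delta$ is the opposite $Q^{\op}$ of a valued quiver $Q$.

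The heart of the argument is then producing $H$, $T$ and $B$. I would let $B$ be the support algebra of the predecessor closure of $\Delta$, which is a convex quotient of $A$ because $\mathcal D$ is predecessor-closed, and verify via the characterization recalled in the preliminaries that $B$ is tilted, i.e.\ that $\Delta$ is a faithful section with $\Hom_B(X,\tau_B Y)=0$ for all $X,Y\in\Delta$. Here the cycle-finiteness of $A$ is what upgrades mere acyclicity of $\Delta$ to this Hom-vanishing, since it forbids the infinite-radical maps that could otherwise create nonzero $\Hom_B(X,\tau_B Y)$. The hereditary algebra $H$ is then the one with $Q_H=Q$, and $T$ is the tilting module determined by $\Delta=\Delta_T=\{\Hom_H(T,I)\}$. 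The type of $Q_H$ is forced to be Euclidean: the Dynkin case would make $\mathcal C$ finite, contradicting that $\mathcal D$ is infinite, while the wild case is excluded because a wild concealed quotient would carry infinite families of regular modules joined by cycles lying in $\rad_A^{\infty}$, contradicting cycle-finiteness. Since $\mathcal D$ is an infinite left-stable predecessor-closed region, the module $T$ has no nonzero preinjective direct summand. This establishes (i).

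For (ii) I would invoke the Brenner-Butler equivalence $\Hom_H(T,-)\colon\mathcal T(T)\to\mathcal Y(T)$ together with the distinguished property of $\Delta_T$ recalled above: every predecessor in $\ind B$ of a module on $\Delta_T$ lies in $\mathcal Y(T)$. Hence the torsion-free part $\mathcal Y(T)\cap\mathcal C_T$ is precisely the set of predecessors of $\Delta$ inside $\mathcal C_T$, which by construction is a full translation subquiver of $\mathcal D$ closed under predecessors in $\mathcal C$. For (iii) I would transport Proposition~\ref{prop21}: under the equivalence the modules of $\mathcal Y(T)\cap\mathcal C_T$ correspond to preinjective (nonprojective) $H$-modules, whose almost split sequences have all terms preinjective, hence lie entirely in $\mathcal T(T)$ and are carried isomorphically into $\mo B$, so that $\alpha_B(N)\leq 4$ on this region. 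Because $B$ is a quotient of $A$ and this region is predecessor-closed in the component $\mathcal C$ of $\Gamma_A$, the almost split sequences over $B$ and over $A$ coincide there, yielding $\alpha_A(N)\leq 4$.

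The main obstacle, as I see it, is the simultaneous control required in the construction: producing a single Euclidean $H$ and tilting module $T$ for which $B=\End_H(T)$ is honestly a quotient of $A$, whose torsion-free connecting part sits inside $\mathcal D$, and for which the bound in (iii) reaches \emph{every} module of $\mathcal D$ rather than only those in $\mathcal Y(T)\cap\mathcal C_T$. Two points are delicate. The first is deducing the vanishing $\Hom_B(X,\tau_B Y)=0$ on the section from cycle-finiteness rather than from acyclicity alone. The second is ensuring that enlarging $B$ to $A$ introduces no new indecomposable summands in the middle terms of the almost split sequences of modules in $\mathcal D$, equivalently that no homomorphism in $\rad_A^{\infty}$ not already accounted for by $B$ interferes, and that $\mathcal D$ is genuinely exhausted by the predecessor regions produced. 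It is precisely here that cycle-finiteness is indispensable, and I expect the bulk of the work to lie in this compatibility between the $A$- and $B$-structures.
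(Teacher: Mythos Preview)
Your overall strategy matches the paper's, but there is a genuine gap in part (iii), and it is precisely the obstacle you flag without resolving. You obtain $\alpha(N)\leq 4$ only for $N\in\mathcal Y(T)\cap\mathcal C_T$ and then worry about whether this reaches all of $\mathcal D$. The paper's resolution is a simple $\tau$-shift: since $\mathcal D$ is left stable and contains no projective modules, every indecomposable summand of the middle term $E$ of the almost split sequence $0\to\tau_A N\to E\to N\to 0$ is nonprojective, so applying $\tau_A^m$ yields almost split sequences
\[
0\longrightarrow\tau_A^{m+1}N\longrightarrow\tau_A^m E\longrightarrow\tau_A^m N\longrightarrow 0
\]
for all $m\geq 0$, each with the same number of indecomposable middle summands. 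For $m$ large enough, all terms lie in $\Gamma=\mathcal Y(T)\cap\mathcal C_T$, where the bound is already established via the tilting equivalence with $\mathcal Q(H)$; hence $\alpha(N)=\alpha(\tau_A^m N)\leq 4$. This dissolves both of your ``delicate points'' simultaneously: one never needs to compare $\alpha_A$ and $\alpha_B$ across all of $\mathcal D$, only on the deep cofinal region $\Gamma$, where the identification with preinjective $H$-modules is direct.

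There is a second, earlier gap you pass over: you never justify that the section $\Delta$ is \emph{finite}, equivalently that $\mathcal D$ has only finitely many $\tau_A$-orbits. Without this $H$ is not an artin algebra and the $\tau$-shift above need not land in $\Gamma$. The paper supplies this by observing that cycle-finiteness forces every acyclic vertex of $\Gamma_A$ to be directing, and then invoking the Peng--Xiao/Skowro\'nski theorem that $\Gamma_A$ has at most finitely many $\tau_A$-orbits of directing modules. This same ingredient drives the paper's exclusion of the wild case: if $Q_H$ were wild, Kerner's theorem produces an acyclic component of $\Gamma_B$ inside $\mathcal Y(T)$ with infinitely many $\tau_B$-orbits, hence infinitely many directing orbits, a contradiction. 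Your proposed exclusion via ``regular modules joined by cycles in $\rad_A^\infty$'' points in the right direction but is imprecise as stated: $B$ is tilted rather than concealed, and the regular components in question are of type $\mathbb Z\mathbb A_\infty$, which are themselves acyclic in $\Gamma_B$.
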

\begin{proof}
Since $A$ is a cycle-finite algebra, every acyclic module $X$ in $\Gamma_A$ is a directing module in $\ind A$. Hence $\mathcal D$ consists entirely
of directing modules. Moreover, it follows from \cite[Theorem 2.7]{PX} and \cite[Corollary 2]{Sk3}, that $\mathcal D$ has only finitely many $\tau_A$-orbits.
Then, applying \cite[Theorem 3.4]{Li1}, we conclude that there is a finite acyclic valued quiver $\Delta$ such that $\mathcal D$ contains a full translation
subquiver $\Gamma$ which is closed under predecessors in $\mathcal C$ and is isomorphic to the translation quiver $\mathbb{N}\Delta$. Therefore, we may
choose in $\Gamma$ a finite acyclic convex subquiver $\Delta$ such that $\Gamma$ consists of the modules $\tau_A^mX$ with $m\geq 0$ and $X$ indecomposable
modules lying on $\Delta$. Let $M$ be the direct sum of all indecomposable modules in $\mathcal C$ lying on the chosen quiver $\Delta$. Let $I$ be the
annihilator $\ann_A(M)=\{a\in A\mid Ma=0\}$ of $M$ in $A$, and $B=A/I$ the associated quotient algebra. Then $I=\ann_A(\Gamma)$ (see \cite[Lemma 3]{Sk2})
and consequently $\Gamma$ consists of indecomposable $B$-modules. Clearly, $B$ is a cycle-finite algebra, as a quotient algebra of $A$. Now, using the fact
that $\Gamma\subseteq\mathbb{N}\Delta$ and consists of directing $B$-modules, we conclude that $\rad^{\infty}_B(M,M)=0$ and $\Hom_B(M,\tau_BM)=0$. Then,
applying \cite[Lemma 3.4]{Sk4}, we conclude that $H=\End_B(M)$ is a hereditary algebra and the quiver $Q_H$ of $H$ is the dual valued quiver $\Delta^{\op}$
of $\Delta$. Further, since $M$ is a faithful $B$-module with $\Hom_B(M,\tau_BM)=0$, we conclude that $\pd_BM\leq 1$ and
$\Ext^1_B(M,M)\cong D\overline{\Hom}_B(M,\tau_BM)=0$ (see \cite[Lemma VIII.5.1 and Theorem IV.2.13]{ASS}). Moreover, it follows from definition of $M$ that,
for any module $Z$ in $\ind B$ with $\Hom_B(M,Z)\neq 0$ and not on $\Delta$, we have $\Hom_B(\tau^{-1}_BM,Z)\neq 0$. Since $M$ is a faithful module in $\mo B$ there
is a monomorphism $B\to M^s$ for some positive integer $s$. Then $\rad^{\infty}_B(M,M)=0$ implies $\Hom_B(\tau^{-1}_BM,B)=0$, and consequently $\id_BM\leq 1$.
Applying now \cite[Lemma 1.6]{RSS} we conclude that $M$ is a tilting $B$-module. Further, applying the Brenner-Butler theorem (see \cite[Theorem VI.3.8]{ASS}), we
conclude that $M$ is a tilting module in $\mo H^{\op}$ and $B\cong\End_{H^{\op}}(M)$. Since $H$ is a hereditary algebra, $T=D(M)$ is a tilting module in
$\mo H$ with $B\cong\End_{H}(T)$, and consequently $B$ is a tilted algebra of type $Q_H=\Delta^{\op}$. Moreover, the translation quiver $\Gamma$ is the
torsion-free part $\mathcal{Y}(T)\cap\mathcal{C}_T$ of the connecting component $\mathcal{C}_T$ of $\Gamma_B$ determined by the tilting $H$-module $T$
(see \cite[Theorem VIII.5.6]{ASS}). Observe that then $\mathcal{Y}(T)\cap\mathcal{C}_T$ is the image $\Hom_H(T,Q(H))$ of the preinjective component $Q(H)$
of $\Gamma_H$ via the functor $\Hom_H(T,-): \mo H \to \mo B$. In particular, we conclude that $H$ is of infinite representation type ($Q_H$ is not a Dynkin quiver)
and $\mathcal{C}_T$ does not contain a projective module, and hence $T$ is without nonzero preinjective direct summands (see \cite[Proposition VIII.4.1]{ASS}).
Finally, we prove that $Q_H=\Delta^{\op}$ is a Euclidean quiver. Suppose that $Q_H$ is a wild quiver. Since $T$ has no nonzero preinjective direct summands,
it follows from \cite{Ke2} that $\Gamma_B$ admits an acyclic component $\Sigma$ with infinitely many $\tau_B$-orbits, with the stable part $\mathbb{Z}\mathbb{A}_{\infty}$,
contained entirely in the torsion-free part $\mathcal{Y}(T)$ of $\mo B$. Since $B$ is a cycle-finite algebra, $\Sigma$ consists of directing $B$-modules,
and hence $\Gamma_B$ contains infinitely many $\tau_B$-orbits containing directing modules, a contradiction.
Therefore, $Q_H$ is a Euclidean quiver and $B$ is a tilted algebra of Euclidean type $Q_H=\Delta^{\op}$. This finishes proof of the statements (i) and (ii).

In order to prove (iii), consider a module $N$ in $\mathcal D$ and an almost split sequence
\[ 0\to \tau_AN\to E\to N\to 0\]
in $\mo A$ with the right term $N$. Since $\mathcal D$ is left stable and closed under predecessors in $\mathcal C$, we have in $\mo A$ almost split sequences
\[ 0\to \tau^{m+1}_AN\to \tau^{m}_AE\to \tau^{m}_AN\to 0\]
for all nonnegative integers $m$. In particular, there exists a positive integer $n$ such that
\[ 0\to \tau^{n+1}_AN\to \tau^{n}_AE\to \tau^{n}_AN\to 0\]
is an exact sequence in the additive category $\add(\mathcal{Y}(T)\cap\mathcal{C}_T)=\add(\Gamma)$. Since $\mathcal{Y}(T)\cap\mathcal{C}_T=\Hom_H(T,Q(H))$,
this exact sequence is the image via the functor $\Hom_H(T,-): \mo H \to \mo B$ of an almost split sequence
\[ 0\to \tau_HU\to V\to U\to 0\]
with all terms in the additive category $\add(Q(H))$ of $Q(H)$. Then, applying Proposition \ref{prop21}, we conclude that
$\alpha(N)=\alpha(\tau^{n}_AN)= \alpha(\tau^{n}_BN)=\alpha(U)\leq 4$.
\end{proof}
\section{Proof of Theorem}
We will use the following results proved by S. Liu in \cite{Li3} (Theorem 7, Proposition 8, Lemma 6 and its dual).
\begin{thm}\label{th31}
Let $A$ be an algebra, and let
\[ 0\to \tau_AX\to\bigoplus_{i=1}^r Y_i\to X\to 0\]
be an almost split sequence in $\mo A$ with $Y_1, \ldots, Y_r$ from $\ind A$. Assume that one of the following conditions holds.
\begin{enumerate}
\renewcommand{\labelenumi}{\rm(\roman{enumi})}
\item $\tau_AX$ has a projective predecessor and $X$ has an injective successor in $\Gamma_A$.
\item $X$ lies on an oriented cycle in $\Gamma_A$.
\end{enumerate}
Then $r\leq 4$, and $r=4$ implies that one of the modules $Y_i$ is projective-injective, whereas the others are neither projective nor injective.
\end{thm}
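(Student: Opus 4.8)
The plan is to reduce both hypotheses to a single local assertion about the mesh ending at $X$ and then to invoke the theory of degrees of irreducible morphisms. I would begin with the elementary remark that, since $X$ is nonprojective and $\tau_A X$ noninjective, the indecomposable middle terms $Y_1,\dots,Y_r$ are precisely the modules admitting an arrow $\tau_A X\to Y_i$ in $\Gamma_A$, equivalently an arrow $Y_i\to X$; thus bounding $r=\alpha(X)$ amounts to bounding the number of immediate successors of $\tau_A X$ in this mesh. The maps $\tau_A X\to Y_i$ are irreducible monomorphisms and the maps $Y_i\to X$ irreducible epimorphisms, and the invariants to control are their left, respectively right, degrees in the sense of Liu.

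The heart of the argument is to show that each hypothesis forces one of the two one-sided degrees of the mesh morphisms to be finite, which is exactly what the subsequent structure theorems require. Under (i), the projective predecessor of $\tau_A X$ blocks the infinite sectional paths approaching $\tau_A X$, and through the radical filtration this produces, for each $i$, a morphism into $\tau_A X$ witnessing that the monomorphism $\tau_A X\to Y_i$ has finite left degree; dually, the injective successor of $X$ supplies morphisms out of $X$ that force the epimorphisms $Y_i\to X$ to have finite right degree. Under (ii), an oriented cycle through $X$ is a path of irreducible maps from $X$ back to $X$ whose composite avoids $\rad_A^{\infty}$, and I would use such a cyclic composite to witness finiteness of the appropriate degree of the mesh morphisms. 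This passage from global quiver-theoretic data to local degree finiteness is the crux.

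Once finiteness of degree is established, I would apply the structure theorems attached to it: an irreducible monomorphism of finite left degree $n$ determines a sectional path of length $n$ terminating in a module isomorphic to its cokernel, so the configuration of irreducible maps at the mesh cannot spread out arbitrarily and must either fit inside a truncated $\mathbb{Z}\Delta$-shaped neighbourhood or be capped by a projective (dually injective) vertex. Combining this shape information with the additivity of the length function along the given exact sequence $0\to\tau_A X\to\bigoplus_{i=1}^r Y_i\to X\to 0$, a case analysis of the admissible mesh shapes reproduces the Bautista--Brenner count and yields $r\leq 4$. In the extremal case $r=4$ the only admissible configuration is the degenerate one in which some $Y_i$ is an indecomposable projective-injective module $P$, whence $\tau_A X=\rad P$ and $X=P/\soc(P)$, and a direct inspection shows that the remaining three middle terms are neither projective nor injective.

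The main obstacle is the second step: deriving finite left or right degree of the mesh morphisms cleanly from each of the two rather different hypotheses, and in particular keeping precise track of the radical-level jumps that witness this finiteness. The delicate point is the interference of projective or injective modules that may sit among the $Y_i$ or along the connecting paths, since these are exactly the vertices where the mesh and sectional structure degenerate and where the extremal value $r=4$ originates. Controlling these degenerate vertices, rather than the combinatorics of the final count, is where the real work lies.
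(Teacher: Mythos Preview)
The paper does not prove this theorem. It is stated with the preamble ``We will use the following results proved by S. Liu in \cite{Li3} (Theorem 7, Proposition 8, Lemma 6 and its dual)'' and is simply imported as a known result from Liu's 1993 paper. There is nothing to compare against on the paper's side.

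That said, your sketch is broadly in the spirit of Liu's original argument, which is indeed organized around the left and right degrees of irreducible morphisms. A couple of points deserve care if you intend to carry this through. First, under hypothesis (ii) you write that an oriented cycle through $X$ is a path of irreducible maps ``whose composite avoids $\rad_A^{\infty}$''; this is not part of the hypothesis and is in general false. An oriented cycle in $\Gamma_A$ is purely a combinatorial datum, and the composite of the irreducible maps along it may well lie in $\rad_A^{\infty}$ (or even be zero). Liu's treatment of case (ii) does not proceed by producing such a composite; rather, the existence of an oriented cycle is used to force both $\tau_A X$ to have a projective predecessor and $X$ to have an injective successor (via the analysis of sectional paths and degrees), thereby reducing (ii) to (i). Second, your description of case (i) is a bit loose: the projective predecessor of $\tau_A X$ does not by itself ``block infinite sectional paths approaching $\tau_A X$''; what it gives is a nonzero map in some finite power of the radical that can be threaded through the mesh to witness finiteness of the relevant degree. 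Getting this witness right, and tracking how it interacts with projective or injective vertices along the way, is exactly the delicate part you flag at the end, and it is where Liu's Lemma 6 and its dual do the work.
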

\begin{prop}\label{prop32}
Let $A$ be an algebra, and let
\[ 0\to \tau_AX\to\bigoplus_{i=1}^r Y_i\to X\to 0\]
be an almost split sequence in $\mo A$ with $r\geq 5$ and $Y_1, \ldots, Y_r$ from $\ind A$. Then the following statements hold.
\begin{enumerate}
\renewcommand{\labelenumi}{\rm(\roman{enumi})}
\item If there is a sectional path from $\tau_AX$ to an injective module in $\Gamma_A$, then $\tau_AX$ has no projective predecessor in $\Gamma_A$.
\item If there is a sectional path from a projective module in $\Gamma_A$ to $X$, then $X$ has no injective successor in $\Gamma_A$.
\end{enumerate}
\end{prop}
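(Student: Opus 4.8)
The plan is to deduce (ii) from (i) by duality and then to establish (i) by the theory of degrees of irreducible morphisms developed by S. Liu in \cite{Li3}. The standard duality $D\colon\mo A\to\mo A^{\op}$ carries the given almost split sequence to an almost split sequence in $\mo A^{\op}$ with the same number $r$ of indecomposable middle terms and with right term $D(\tau_A X)=\tau_{A^{\op}}^{-1}(DX)$; it exchanges projective with injective modules and predecessors with successors, and it reverses sectional paths. Under this correspondence a sectional path from a projective module to $X$ in $\Gamma_A$ becomes a sectional path from $\tau_{A^{\op}}(D\tau_A X)=DX$ to an injective module in $\Gamma_{A^{\op}}$, while an injective successor of $X$ becomes a projective predecessor of $DX$. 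Hence statement (ii) for $A$ is precisely statement (i) applied to $A^{\op}$, and it suffices to prove (i).

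To prove (i) I would argue by contradiction, assuming both that there is a sectional path from $\tau_A X$ to an injective module and that $\tau_A X$ has a projective predecessor in $\Gamma_A$. Choosing a sectional path $\tau_A X=M_0\to M_1\to\cdots\to M_t=I$ of minimal length, all of $M_0,\ldots,M_{t-1}$ are non-injective and $I$ is injective. Since the irreducible morphisms starting at $\tau_A X$ are exactly those going to the middle terms of the almost split sequence, $M_1$ is one of the modules $Y_i$, say $M_1=Y_1$, so our sectional path begins with the irreducible morphism $u\colon\tau_A X\to Y_1$ occurring in the sequence.

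The engine of the argument is then Liu's correspondence between finiteness of degrees of irreducible morphisms and sectional paths to projective or injective modules. A sectional path of length $t$ issuing from the domain $\tau_A X$ of $u$ and ending at the injective module $I$ forces the left degree of $u$ to be finite (bounded by $t$): the mesh at the injective endpoint is one-sided, and transporting the resulting truncated relations back along the sectional path produces a morphism $g\colon Z\to\tau_A X$ lying in $\rad_A^{t}\setminus\rad_A^{t+1}$ with $ug\in\rad_A^{t+2}$. Liu's structure theorem for irreducible morphisms of finite left degree then describes the local shape of $\Gamma_A$ around $Y_1$ and $X$ precisely enough that, since the almost split sequence ending at $X$ has $r\geq 5$ middle terms, one is placed in the configuration of Theorem~\ref{th31}(i): the assumed projective predecessor of $\tau_A X$ supplies the left-hand input, and the degree analysis supplies an injective successor of $X$ on the right. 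Theorem~\ref{th31} then bounds the number of middle terms by $4$, contradicting $r\geq 5$ and proving (i).

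The main obstacle is exactly the degree-theoretic step of the third paragraph. One must first establish cleanly that a sectional path to an injective yields the asserted finite left degree (this is the content of the lemma and its dual quoted from \cite{Li3}), and then, using the quantitative form of Liu's structure theorem, extract from $r\geq 5$ together with this finiteness the injective successor of $X$ needed to invoke Theorem~\ref{th31}. This requires careful bookkeeping of valued arrows and multiplicities in $\Gamma_A$, a correct choice of which $\tau_A$-translate to work at, and the exclusion of the borderline case in which a projective-injective module appears (the equality case $r=4$ in Theorem~\ref{th31}); ruling out that case is precisely what the strict inequality $r\geq 5$ provides. Granting the degree machinery of \cite{Li3}, the remaining steps are the routine translations recorded in the first two paragraphs.
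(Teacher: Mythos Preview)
The paper does not give its own proof of Proposition~\ref{prop32}; the result is quoted from Liu \cite{Li3} (his Proposition~8, together with his Lemma~6 and its dual), so there is no argument in the paper to compare against. Your duality reduction of (ii) to (i) is correct and standard, and your identification of Liu's degree theory as the relevant tool is right.

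The genuine gap is the reduction you propose in the third paragraph. You assert that once the left degree of $u\colon\tau_A X\to Y_1$ is shown to be finite, ``the degree analysis supplies an injective successor of $X$ on the right,'' after which Theorem~\ref{th31}(i) would close the argument. But the hypothesis only furnishes a sectional path from $\tau_A X$ to an injective $I$; this makes $I$ a successor of $\tau_A X$, not of $X$, and nothing you have written produces a path in $\Gamma_A$ from $X$ to any injective module. Finiteness of the \emph{left} degree of $u$ constrains morphisms \emph{into} $\tau_A X$ (that is what the definition says: one studies $g\colon Z\to\tau_A X$ with $ug$ deeper in the radical), so Liu's structure theorem for maps of finite left degree controls predecessors of $\tau_A X$, not successors of $X$. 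The reduction to Theorem~\ref{th31}(i) therefore does not go through as stated, and this is not merely a bookkeeping issue. In Liu's own treatment, his Theorem~7 and Proposition~8 are proved in parallel directly from the degree calculus, not one from the other: the mechanism is that a projective predecessor of $\tau_A X$ forces enough of the irreducible maps $\tau_A X\to Y_i$ to have \emph{infinite} left degree, while the sectional path to an injective forces finite left degree along it, and with $r\geq 5$ arrows out of $\tau_A X$ the two constraints become incompatible after a valuation count. Your outline names the right toolbox but substitutes an unjustified reduction for the actual counting argument.
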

We are now in position to prove the main result of the paper.

Let $A$ be a cycle-finite algebra, and let
\[ 0\to \tau_AX\to\bigoplus_{i=1}^r Y_i\to X\to 0\]
be an almost split sequence in $\mo A$ with $Y_1, \ldots, Y_r$ from $\ind A$, and let $\mathcal C$ be the component of $\Gamma_A$ containing $X$.
Assume $r\geq 5$. We claim that then $r=5$, one of the modules $Y_i$ is projective-injective, whereas the others are neither projective nor injective.

Since $r\geq 5$, it follows from Theorem \ref{th31} that $\tau_AX$ has no projective predecessor nor $X$ has no injective successor in $\Gamma_A$.
Assume that $\tau_AX$ has no projective predecessor in $\Gamma_A$.

We claim that then one of the modules $Y_i$ is projective. Suppose it is not the case. Then for any nonnegative integer $m$ we have in $\mo A$
an almost split sequence
\[ 0\to \tau^{m+1}_AX\to \bigoplus_{i=1}^r\tau^{m}_AY_i\to \tau^{m}_AX\to 0\]
with $r\geq 5$ and $\tau^{m}_AY_1, \ldots, \tau^{m}_AY_r$ from $\ind A$, because $\tau_AX$ has no projective predecessor in $\Gamma_A$. Moreover,
it follows from Theorem \ref{th31}, that $\tau_A^mX$, $m\geq 0$, are acyclic modules in $\Gamma_A$. Then it follows from \cite[Theorem 3.4]{Li1}
that the modules $\tau_A^mX$, $m\geq 0$, belong to an acyclic left stable full translation subquiver $\mathcal D$ of $\mathcal C$ which is closed
under predecessors. But then the assumption $r\geq 5$ contradicts Theorem \ref{th22}(iii). Therefore, one of the modules $Y_i$, say $Y_r$ is projective.

Observe now that the remaining modules $Y_1, \ldots, Y_{r-1}$ are noninjective. Indeed, since $Y_r$ is projective, we have $\ell(\tau_AX)<\ell(Y_r)$
and consequently $\sum_{i=1}^{r-1}\ell(Y_i)<\ell(X)$. Further, $Y_r$ is a projective predecessor of $X$ in $\Gamma_A$, and hence, applying Proposition
\ref{prop32}(ii), we conclude that $X$ has no injective successors in $\Gamma_A$. We claim that $Y_r$ is injective. Indeed, if it is not the case, we
have in $\mo A$ almost split sequences
\[ 0\to \tau^{-m+1}_AX\to \bigoplus_{i=1}^r\tau^{-m}_AY_i\to \tau^{-m}_AX\to 0\]
for all nonnegative integers $m$. Then, applying the dual of Theorem \ref{th22}, we obtain a contradiction with $r\geq 5$. Thus $Y_r$ is projective-injective.
Observe that then the modules $Y_1, \ldots, Y_{r-1}$ are nonprojective, because $Y_r$ injective forces the inequalities $\ell(X)<\ell(Y_r)$ and
$\sum_{i=1}^{r-1}\ell(Y_i)<\ell(\tau_AX)$.

Finally, since $\tau_AX$ has no projective predecessor in $\Gamma_A$, we have in $\mo A$ almost split sequences
\[ 0\to \tau^{m+1}_AX\to \bigoplus_{i=1}^{r-1}\tau^{m}_AY_i\to \tau^{m}_AX\to 0\]
for all positive integers $m$. Applying Proposition \ref{prop32} again, we conclude (as in the first part of the proof) that $r-1\leq 4$, and hence $r\leq 5$.
Therefore, $\alpha(X)=r=5$, one of the modules $Y_i$ is projective-injective, whereas the others are neither projective nor injective.
Moreover, if $Y_i$ is a projective-injective module, then $X\cong Y_i/\soc(Y_i)$.

\end{document}